\documentclass[11pt]{article}
\usepackage{amsthm, amsmath, amssymb, amsfonts, url, booktabs, tikz, setspace, fancyhdr, amsbsy}
\usepackage[margin = 0.8in]{geometry}
\usepackage{esint}
\usepackage{verbatim}


\newtheorem{theorem}{Theorem}[section]

\newtheorem{lemma}[theorem]{Lemma}

\theoremstyle{definition}
\newtheorem{definition}[theorem]{Definition}

\theoremstyle{remark}




\newcommand{\norm}[1]{\left\lVert#1\right\rVert}


\newcommand{\R}{\mathbb{R}}
\newcommand{\N}{\mathbb{N}}

\newcommand{\defeq}{\mathrel{\mathop:}=}

\newcommand{\uu}{\boldsymbol{u}}
\newcommand{\DD}{\boldsymbol{D}}

\newcommand{\SSS}{\boldsymbol{S}}
\newcommand{\GGG}{\boldsymbol{G}}

\newcommand{\dx}{\,\mathrm{d}x}

\newcommand{\dt}{\,\mathrm{d}t}
\newcommand{\ds}{\,\mathrm{d}s}
\newcommand{\DDD}{\overline{\DD}}
\numberwithin{equation}{section}

\def\ocirc#1{\ifmmode\setbox0=\hbox{$#1$}\dimen0=\ht0
    \advance\dimen0 by1pt\rlap{\hbox to\wd0{\hss\raise\dimen0
    \hbox{\hskip.2em$\scriptscriptstyle\circ$}\hss}}#1\else
    {\accent"17 #1}\fi}

\begin{document}

\title{Temporal decay of strong solutions for generalized Newtonian fluids with variable power-law index}

\author{Seungchan Ko \\ {\textit{\footnotesize{Department of Mathematics, The University of Hong Kong, Pokfulam Road, Hong Kong.}}} \thanks{Email: \tt{scko@maths.hku.hk}}
}

\date{~}

\maketitle

~\vspace{-1.5cm}

\begin{abstract}
We consider the motion of a power-law-like generalized Newtonian fluid in $\R^3$, where the power-law index is a variable function. This system of nonlinear partial differential equations arises in mathematical models of electrorheological fluids. The aim of this paper is to investigate the decay properties of strong solutions for the model, based on the Fourier splitting method. We first prove that the $L^2$-norm of the solution has the decay rate $(1+t)^{-\frac{3}{4}}$. If the $H^1$-norm of the initial data is sufficiently small, we further show that the derivative of the solution decays in $L^2$-norm at the rate $(1+t)^{-\frac{5}{4}}$.
\end{abstract}

\noindent{\textbf{Keywords:} Non-Newtonian fluid, variable exponent, electrorheological fluid, temporal decay}

\smallskip

\noindent{\textbf{AMS Classification:} 76A05, 76D05, 35B40}

\begin{section}{Introduction}
We are interested in studying the decay properties for solutions of a system of nonlinear partial differential
equations (PDEs) modelling the rheological response of electrorheological fluids. The electrorheological fluid is a viscous fluid with a special property: When it is disposed to an electro-magnetic field, the viscosity exhibits a significant change. For instance, there exist some electrorheological fluids whose viscosity changes by a factor of $1000$ as a response to the application of an electric field within 1ms. These days, some useful electrorheological fluids were found with the quality and potential for a wide range of scientific and industrial applications, including for example, clutches, shock absorbers and actuators.
 In this paper, we consider the reduced model for the incompressible electrorheological fluids, which consists of the following system of PDEs:
\begin{alignat}{2}
\partial_t\uu+(\uu\cdot\nabla)\uu-{\rm{div}}\,\SSS(\DD\uu)+\nabla \pi&=0\qquad &&{\rm{in}}\,\,Q_T=(0,T)\times\R^d,\label{eq1}\\
{\rm{div}}\,\uu&=0\qquad &&{\rm{in}}\,\,Q_T=(0,T)\times\R^d,\label{eq2}
\end{alignat}
where the extra stress tensor is of the form
\begin{equation}\label{ST}
\SSS(\DD\uu)=(1+|\DD\uu|^2)^{\frac{p(t,x)-2}{2}}\DD\uu.
\end{equation}
Indeed, the exponent $p(\cdot)$ depends on the magnitude of the electric field $|\boldsymbol{E}|$, which is the solution of quasi-static Maxwell's equations. However, since the equations for $|\boldsymbol{E}|$ decouple from \eqref{eq1}-\eqref{eq2}, we can consider $p(\cdot)$ as a given function and restrict ourselves to the study of the equations \eqref{eq1}-\eqref{ST}.
In the above system of equations, $\uu:Q_T\rightarrow\R^d$, $p:Q_T\rightarrow\R$, denote the velocity field and pressure respectively, and $\DD\uu$ is the symmetric velocity gradient, i.e. $\DD\uu=\frac{1}{2}(\nabla\uu+(\nabla\uu)^T)$. Here we prescribe the initial condition
\[\uu(0,x)=\uu_0(x)\quad{\rm{in}}\,\,\R^d.\]
Such electrorheological models were studied in \cite{R2000, R2004}, where the detailed description of the model is presented and existence theory and numerical approximation are developed. 

In the present paper, we shall investigate the decay properties of the model \eqref{eq1}-\eqref{ST}.  Regarding this matter, for the Navier--Stokes equations, there are a large number of contributions; see, for example, \cite{Schonbek_1, Schonbek_2} where the Fourier splitting method was developed, and \cite{kato, ref_1, ref_2, Schonbek_3, Schonbek_4, titi} with the references therein for related results such as upper and lower bounds on the decay rate for various norms and classes of initial data. For the magnetohydrodynamics equations where the Navier--Stokes equations are coupled with Maxwell's equations, see \cite{mag_1, mag_2, mag_3, mag_4}. On the other hand, for the non-Newtonian fluid flow model, the algebraic $L^2$ decay of solutions was examined in \cite{non_1, non_2, Dong}, and the similar results for the non-Newtonian fluids combined with Maxwell's equations were studied in \cite{non_mag_2, non_mag_1}.

To the best of our knowledge, there is no result for the decay properties of solutions to generalized Newtonian fluids with a variable power-law index. In this paper, we shall study the $L^2$ decay rates for the $L^2$-norm and $H^1$-seminorm of strong solution to the electrorheological fluids.
\end{section}

\begin{section}{Preliminaries and main theorem}
In this section, we first introduce some notations and preliminaries which will be used throughout the paper. For two vectors $\boldsymbol{a}$ and $\boldsymbol{b}$, $\boldsymbol{a}\cdot \boldsymbol{b}$ denotes the scalar product, and $C$ denotes a generic positive constant, which may differ at each appearance. For $1\leq p\leq\infty$ and $k\in\N$, we mean by $W^{k,p}(\R^d)$ the standard Sobolev space and we denote $H^k(\R^d)=W^{k,2}(\R^d)$. Furthermore, for simplicity, we write $\|\cdot\|_p=\|\cdot\|_{L^p(\R^d)}$ and $\|\cdot\|_{k,p}=\|\cdot\|_{W^{k,p}(\R^d)}$. We also recall Korn's inequality (see, for instance, Lemma 2.7 in \cite{Cauchy}).
\begin{lemma}
Assume that $1<q<\infty$. Then there exists a positive constant $C>0$ depending on $q$ such that for any $\uu\in W^{1,q}(\R^d)^d$, we have
\[\|\nabla\uu\|_q\leq C\|\DD\uu\|_q.\]
\end{lemma}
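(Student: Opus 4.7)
The plan is to reduce the inequality to the $L^q$-boundedness of a bank of zero-order Riesz-type Fourier multipliers, which is the classical route for Korn's inequality on the whole space $\R^d$.

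The starting point is the elementary pointwise identity
\[
\partial_j\partial_k u_i \;=\; \partial_k(\DD\uu)_{ij} + \partial_j(\DD\uu)_{ik} - \partial_i(\DD\uu)_{jk},
\]
which follows in one line from $(\DD\uu)_{ab}=\tfrac12(\partial_a u_b+\partial_b u_a)$ and expresses every second derivative of $\uu$ as a first derivative of $\DD\uu$. Working first with $\uu\in C_c^\infty(\R^d)^d$, I would pass to the Fourier side, contract $k=j$ and sum over $j$, and then solve for $\hat u_i$ to arrive at the representation
\[
\widehat{\partial_\ell u_i}(\xi) \;=\; \sum_j \frac{2\,\xi_\ell \xi_j}{|\xi|^2}\,\widehat{(\DD\uu)_{ij}}(\xi) \;-\; \sum_j \frac{\xi_\ell \xi_i}{|\xi|^2}\,\widehat{(\DD\uu)_{jj}}(\xi).
\]

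Each symbol $\xi_a\xi_b/|\xi|^2$ is smooth and homogeneous of degree zero on $\R^d\setminus\{0\}$. By the Mihlin--H\"ormander multiplier theorem (equivalently, the $L^q$-boundedness of second-order Riesz transforms, a standard consequence of Calder\'on--Zygmund theory), each corresponding operator is bounded on $L^q(\R^d)$ for every $1<q<\infty$ with a constant depending only on $q$ and $d$. Summing over $\ell$ and $i$ and applying the triangle inequality yields $\|\nabla\uu\|_q\le C\|\DD\uu\|_q$ on Schwartz fields, and density of $C_c^\infty(\R^d)^d$ in $W^{1,q}(\R^d)^d$ together with continuity of both sides in the $W^{1,q}$ norm closes the argument for general $\uu$.

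The main place where care is required is precisely this appeal to singular integral theory, and it is exactly what forces the hypothesis $1<q<\infty$: the multipliers $\xi_a\xi_b/|\xi|^2$ correspond to operators that are unbounded on $L^1$ and $L^\infty$, so no such estimate can hold at the endpoints. In the Hilbert case $q=2$ the whole argument collapses to the elementary identity $\|\DD\uu\|_2^2=\tfrac12\|\nabla\uu\|_2^2+\tfrac12\|\mathrm{div}\,\uu\|_2^2$, obtained by one integration by parts, so the Calder\'on--Zygmund input is only genuinely needed when $q\ne 2$.
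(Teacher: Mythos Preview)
Your argument is correct: the second-derivative identity is right, the reduction to the Fourier multipliers $\xi_a\xi_b/|\xi|^2$ is clean, and the appeal to Mihlin--H\"ormander (equivalently, $L^q$-boundedness of iterated Riesz transforms) together with density is exactly what is needed for $1<q<\infty$. The side remark on the $q=2$ case is also accurate.

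There is nothing to compare against, however: the paper does not prove this lemma at all. It merely states Korn's inequality and points to the literature (specifically Lemma~2.7 of the reference for the Cauchy problem). Your write-up therefore supplies strictly more than the paper does; the Riesz-transform route you chose is the standard proof for Korn's inequality on $\R^d$ and is presumably what one would find by following the citation.
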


Next, since we deal with the variable power-law index, we introduce  the variable-exponent Lebesgue space. We denote a set of all measurable functions $p:\Omega\rightarrow[1,\infty)$ by $\mathcal{P}(\Omega)$ and we call a function $p\in\mathcal{P}(\Omega)$ a variable exponent. Let $\Omega\subset\R^d$ be open and $p\in\mathcal{P}(\Omega)$. We define the bounds for $p$ as $p^-={\rm{ess}}\,\inf_{x\in\Omega}p(x)$ and $p^+={\rm{ess}}\,\sup_{x\in\Omega}p(x).$
For each measurable functions $f:\Omega\rightarrow\R$, we define the modular $|f|_{p(\cdot)}$ by
\[|f|_{p(\cdot)}=\int_{\Omega}|f(x)|^{p(x)}\dx.\] 
Then we define the variable-exponent Lebesgue spaces with the corresponding Luxembourg norms:
\begin{equation*}
L^{p(\cdot)}(\Omega)\defeq \left\{u\in L^1_{\rm{loc}}(\Omega):|u|_{p(\cdot)}<\infty\right\},\quad{\rm{with}}\quad \norm{u}_{L^{p(\cdot)}(\Omega)}\defeq \inf\left\{\lambda>0:\bigg|\frac{u(x)}{\lambda}\bigg|_{p(\cdot)}\leq1\right\}.
\end{equation*}
If $1<p^-\leq p^+<\infty$, it is straightforward to show that  the variable-exponent space $L^{p(\cdot)}(\Omega)$ is a reflexive, separable Banach space. Throughout this paper, we shall assume that  $1<p^-\leq p^+<\infty$.

Next we introduce the minimum regularity of the exponent $p(\cdot)$, which guarantees the validity of various results from the theory of classical Lebesgue space: {\textit{{\rm{log}}-H\"older continuity}}.

\begin{definition}
We call a function $p:\Omega\rightarrow\R$ is locally {\textit{{\rm{log}}-H\"older continuous}} on $\Omega$ if there exists $C_1$ satisfying for all $x,y\in\Omega$,
\begin{equation}\label{local_log_holder}
|p(x)-p(y)|\leq\frac{C_1}{{\rm{log}}(e+1/|x-y|)}.
\end{equation}
We also say that $p$ satisfies the {\textit{{\rm{log}}-H\"older decay condition}} if there exist constants $p_{\infty}\in\R$ and $C_2>0$ such that for all $x\in\Omega,$
\begin{equation}\label{log_holder_decay}
|p(x)-p_{\infty}|\leq\frac{C_2}{{\rm{log}}(e+|x|)}.
\end{equation}
We say that $p$ is {\textit{globally {\rm{log}}-H\"older continuous}} in $\Omega$ if it satisfies both \eqref{local_log_holder} and \eqref{log_holder_decay}. We call $C_{\rm{log}}(p)\defeq\max\{C_1,C_2\}$ a log-{\textit{H\"older constant}} of $p$.
\end{definition}

\begin{definition}
We define the class of log-H\"older continuous variable exponents:
\[\mathcal{P}^{\rm{log}}(\Omega)\defeq\left\{p\in\mathcal{P}(\Omega):\frac{1}{p}\,\,{\rm{is}}\,\,{\rm{globally}}\,\,{\textrm{log-H\"older}}\,\,{\rm{continuous}}\right\}.\]
If $\Omega$ is unbounded, we define $p_{\infty}$ by $\frac{1}{p_{\infty}}\defeq\lim_{|x|\rightarrow\infty}\frac{1}{p(x)}$.
\end{definition}
Note that, since $p\mapsto\frac{1}{p}$ is bilipschitz form $[p^-,p^+]$ to $\left[\frac{1}{p^+},\frac{1}{p^-}\right]$, if $p\in\mathcal{P}(\Omega)$ and $p^+<\infty$, we can show that $p\in\mathcal{P}^{\rm{log}}(\Omega)$ if and only if $p$ is globally H\"older continuous. For further information, see \cite{DHHR2011} as an extensive source of information for the variable-exponent spaces.

Next, we introduce the notation
\[\overline{\DD}\uu\defeq(1+|\DD\uu|^2)^{\frac{1}{2}},\]
and define the following energies:
\begin{align*}
\mathcal{I}_{p}(\uu)(t) &=\int_{\R^d}(\overline{\DD}\uu)^{p(\cdot)-2}|\DD\uu|^2\dx, \\
\mathcal{J}_{p}(\uu)(t) &=\int_{\R^d}(\overline{\DD}\uu)^{p(\cdot)-2}|\nabla\DD\uu|^2\dx. \\
\end{align*}


In this paper, for the sake of simplicity, we consider the case of three space dimensions. Note that the proof for the general case follows in a similar manner. The existence of strong solutions of the model \eqref{eq1}-\eqref{ST} can be found in some literatures. For example, in \cite{strong_sol_per}, it was shown that the local strong solutions exist under the condition $\frac{7}{5}<p^-\leq p^+\leq 2$ in a bounded domain with periodic boundary conditions. On the other hand, for the case of the whole space $\R^3$, in \cite{Cauchy} the authors considered the model with a constant exponent $p$, and proved the existence of global-in-time strong solutions provided that $p\geq\frac{11}{5}$.  For the variable-exponent case, we can show the existence of strong solutions in $\R^3$ to the equations \eqref{eq1}, \eqref{eq2} with
\begin{equation}\label{xdep}
\SSS(\DD\uu)=(1+|\DD\uu|^2)^{\frac{p(x)-2}{2}}\DD\uu,
\end{equation}
where the exponent $p(\cdot)$ only depends on the spatial variable $x\in\R^3$. Indeed,  if we assume $p\in W^{1,\infty}(\R^3)\cap\mathcal{P}^{\rm{log}}(\R^3)$ with $p^-\geq\frac{11}{5}$, by following the argument presented in \cite{Cauchy} we can show in a straightforward manner that the global strong solutions of \eqref{eq1}, \eqref{eq2} and \eqref{xdep} exist.  Furthermore, based on the method presented in \cite{Malek}, we can also show the existence of global strong solutions to \eqref{eq1}, \eqref{eq2} and \eqref{xdep} provided that $p\in W^{1,\infty}(\R^3)\cap\mathcal{P}^{\rm{log}}(\R^3)$ and $p^->\frac{5}{3}$, under the smallness assumption of the initial data. The existence of global strong solutions of the model \eqref{eq1}-\eqref{ST} in $\R^3$ where $p=p(t,x)$ depends on both time and space variables is still open.

In this paper, we consider the aforementioned global-in-time strong solutions of the model \eqref{eq1}, \eqref{eq2} and \eqref{xdep} with the condition $p^-\geq\frac{11}{5}$. Note however, that our proofs of Theorem \ref{main_thm} and Theorem \ref{second_thm} also work for the space-time-dependent power-law index, and hence we can extend the current results to the case of the equations \eqref{eq1}-\eqref{ST} if the existence of corresponding strong solutions for $p=p(t,x)$ is guaranteed. Here strong solutions of the model \eqref{eq1}, \eqref{eq2} and \eqref{xdep} mean that $\uu\in L^{\infty}((0,T);H^1(\R^3)^3)\cap L^2((0,T); H^2(\R^3)^3)$, $|\nabla\uu|\in L^{p(\cdot)}(Q_T)\cap L^{\infty}((0,T);L^{p(\cdot)}(\R^3))$ and $\partial_t\uu\in L^2(Q_T)^3$ with the following energy inequalities:
\begin{align}
\sup_{0\leq t\leq T}\|\uu(t)\|^2_2+\int_0^T\mathcal{I}_{p}(\uu)(t)\dt &\leq \|\uu_0\|^2_2\label{energy_1},\\
\sup_{0\leq t\leq T}\|\nabla\uu(t)\|^2_2+\int_0^T\mathcal{J}_{p}(\uu)(t)\dt &\leq C(\|\uu_0\|_{H^1})\label{energy_2}.
\end{align}

Now we are ready to state our main theorems. Note here that the condition $p\in\mathcal{P}^{\rm{log}}(\R^3)\cap W^{1,\infty}(\R^3)$ is required for the existence of strong solutions, and not for the proof of Theorem \ref{main_thm}, and hence will be omitted in the statement of the theorem. On the other hand, the condition $p\in W^{1,\infty}(\R^3)$ is needed for the proof of Theorem \ref{second_thm}.
\begin{theorem}\label{main_thm}
Suppose that $\uu_0\in L^1(\R^3)\cap H^1(\R^3)$, and assume that $p^-\geq\frac{11}{5}$. Then for the strong solutions $\uu$ of \eqref{eq1}, \eqref{eq2} and \eqref{xdep} we have
\[\|\uu(t)\|_2\leq C(1+t)^{-\frac{3}{4}}\quad\forall t>0,\]
where the constant $C>0$ depends on the $L^1$ and $H^1$-norms of $\uu_0$.
\end{theorem}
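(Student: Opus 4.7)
My plan is to adapt Schonbek's Fourier splitting method to the variable-exponent stress, combined with a bootstrap on the decay rate. The argument proceeds in four parts.

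First, I derive a differential energy inequality. Testing \eqref{eq1} against $\uu$ and using $\op{div}\uu=0$ eliminates pressure and convection. Since $p^-\geq 11/5>2$, the weight $(1+|\DD\uu|^2)^{(p(x)-2)/2}\geq 1$, giving $\SSS(\DD\uu):\DD\uu\geq |\DD\uu|^2$; combined with $\|\DD\uu\|_2^2=\tfrac12\|\nabla\uu\|_2^2$ for divergence-free fields on $\R^3$, this yields $\tfrac{d}{dt}\|\uu\|_2^2+\|\nabla\uu\|_2^2\leq 0$. With $B(t)\defeq\{\xi\in\R^3:|\xi|^2\leq a/(1+t)\}$ for a constant $a>0$ to be chosen, Plancherel and the bound $|\xi|^2\geq a/(1+t)$ on $B(t)^c$ combine with the integrating factor $(1+t)^a$ to give
\[
\frac{d}{dt}\bigl[(1+t)^a\|\uu(t)\|_2^2\bigr]\leq a(1+t)^{a-1}\int_{B(t)}|\hat{\uu}(\xi,t)|^2\,d\xi.
\]

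Second, I extract the linear part of the stress via $\SSS(\DD\uu)=\DD\uu+\widetilde{\SSS}(\DD\uu)$ with $\widetilde{\SSS}\defeq[(1+|\DD\uu|^2)^{(p-2)/2}-1]\DD\uu$. Since $\op{div}\DD\uu=\tfrac12\Delta\uu$ on divergence-free fields, the momentum equation becomes $\partial_t\uu-\tfrac12\Delta\uu=\op{div}\widetilde{\SSS}-(\uu\cdot\nabla)\uu-\nabla\pi$. Fourier transforming, composing with the Leray projection, and applying Duhamel with heat kernel $e^{-|\xi|^2 t/2}$ yields
\[
|\hat{\uu}(\xi,t)|\leq|\hat{\uu}_0(\xi)|+C|\xi|\int_0^t\bigl(\|\uu(s)\|_2^2+\|\widetilde{\SSS}(\DD\uu(s))\|_1\bigr)ds,
\]
using $|e^{-|\xi|^2(t-s)/2}|\leq 1$ and $\|\uu\otimes\uu\|_1\leq\|\uu\|_2^2$.

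The central obstacle is to control $\|\widetilde{\SSS}\|_1$ uniformly in time, since neither $\|\SSS(\DD\uu)\|_1$ nor $\|\DD\uu\|_1$ is a priori finite on the unbounded domain $\R^3$. The key is the pointwise bound $|\widetilde{\SSS}(\DD\uu)|\leq C(1+|\DD\uu|^2)^{(p-2)/2}|\DD\uu|^2$, obtained by case analysis: on $\{|\DD\uu|\leq 1\}$ the mean value theorem gives $|\widetilde{\SSS}|\leq C|\DD\uu|^3\leq C(1+|\DD\uu|^2)^{(p-2)/2}|\DD\uu|^2$ (using $|\DD\uu|\leq 1\leq(1+|\DD\uu|^2)^{(p-2)/2}$ there), while on $\{|\DD\uu|>1\}$ one uses $|\DD\uu|\leq|\DD\uu|^2$ directly. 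Integrating in space yields $\|\widetilde{\SSS}\|_1\leq C\mathcal{I}_p(\uu)$, and the energy inequality \eqref{energy_1} then gives $\int_0^t\|\widetilde{\SSS}\|_1\,ds\leq C\|\uu_0\|_2^2$ uniformly in $t$. Together with $|\hat{\uu}_0(\xi)|\leq\|\uu_0\|_1$, this bounds $|\hat{\uu}(\xi,t)|\leq C+C|\xi|(1+t)$.

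Finally, I bootstrap the decay rate. For $\xi\in B(t)$ the Duhamel bound gives $|\hat{\uu}|^2\leq C(1+t)$, whence $\int_{B(t)}|\hat{\uu}|^2\,d\xi\leq C(1+t)|B(t)|\leq C(1+t)^{-1/2}$; substituting into the differential inequality with $a>1/2$ and integrating produces the rough rate $\|\uu\|_2\leq C(1+t)^{-1/4}$. Plugging this improved bound back into the Duhamel estimate sharpens $\int_0^t\|\uu\|_2^2\,ds$ to $O((1+t)^{1/2})$, so $|\hat{\uu}(\xi,t)|\leq C$ uniformly on $B(t)$, giving $\int_{B(t)}|\hat{\uu}|^2\,d\xi\leq C(1+t)^{-3/2}$. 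Re-integrating the differential inequality with $a>3/2$ then delivers the claimed $\|\uu\|_2\leq C(1+t)^{-3/4}$.
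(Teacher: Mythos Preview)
Your argument is correct and shares the paper's overall Fourier-splitting architecture: extract the Laplacian from the stress, bound $|\widehat{\uu}(\xi,t)|$ via Duhamel, and bootstrap the decay rate. The substantive difference lies in the estimate for $\int_0^t\|\widetilde{\SSS}(\DD\uu)\|_1\,ds$ (the paper's $\int_0^t\|\GGG(\DD\uu)\|_1\,ds$). In Lemma~\ref{main_est} the paper uses the rougher pointwise bound $|\GGG|\lesssim|\DD\uu|^2+|\DD\uu|^{p(x)-1}$; on $\{|\DD\uu|<1\}$ the second piece contributes $\int|\DD\uu|^{p^--1}$, which forces a case split: for $p^-\geq 3$ it is closed by interpolation between $L^2$ and $L^{p^-}$, while for $\tfrac{11}{5}\leq p^-<3$ it is handled by Gagliardo--Nirenberg, producing a term depending on $\int_0^t\|\uu\|_2^{2\alpha/(2-\beta)}$ that feeds back into the Fourier estimate and must be closed by Gronwall's inequality rather than direct iteration. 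Your sharper pointwise inequality $|\widetilde{\SSS}|\leq C(\DDD\uu)^{p-2}|\DD\uu|^2$ gives immediately $\int_0^t\|\widetilde{\SSS}\|_1\,ds\leq C\int_0^T\mathcal{I}_p(\uu)\,dt\leq C\|\uu_0\|_2^2$ from \eqref{energy_1} alone, eliminating both the case distinction and the Gronwall step, and never invoking the $H^2$ bound \eqref{energy_2}. This is a genuinely more elementary route to the same conclusion.
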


\begin{theorem}\label{second_thm}
Suppose that $\uu_0\in L^1(\R^3)\cap H^1(\R^3)$, and assume that  $p\in W^{1,\infty}(\R^3)$ with $\frac{11}{5}\leq p^-\leq p^+<\frac{8}{3}$. Then there exists a small number $\varepsilon>0$ such that if $\|\uu_0\|_{H^1}<\varepsilon$, then for the strong solutions $\uu$ of \eqref{eq1}, \eqref{eq2} and \eqref{xdep} we have
\[\|\nabla\uu(t)\|_2\leq C(1+t)^{-\frac{5}{4}}\quad\forall t>0,\]
where the constant $C>0$ depends on the $L^1$ and $H^1$-norms of $\uu_0$.
\end{theorem}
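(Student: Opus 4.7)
The plan is to apply the Fourier splitting method directly to the first-order energy $\|\nabla\uu(t)\|_2^2$, using the $L^2$ decay from Theorem~\ref{main_thm} to drive the low-frequency estimate and the $H^1$-smallness of the initial data to tame both the convection and the extra terms that arise from differentiating the variable-exponent stress.

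To set up the differential inequality, I would differentiate \eqref{eq1} and test with $\partial_k\uu$, summed over $k$ (equivalently, test \eqref{eq1} with $-\Delta\uu$). The pressure drops out by incompressibility, and since $p\ge 2$ the convex structure of $\SSS$ yields
\[
\sum_{k=1}^{3}\int_{\R^3}\partial_k\SSS(\DD\uu):\partial_k\DD\uu\dx \ge \mathcal{J}_p(\uu) - \mathcal{R}(\uu),
\]
where the remainder $\mathcal{R}(\uu)$ arises from the extra contribution $(\partial_kp)(\overbar{\DD}\uu)^{p(x)-2}\ln(\overbar{\DD}\uu)$ produced by $\partial_k(\overbar{\DD}\uu)^{p(x)-2}$. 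Using $p\in W^{1,\infty}(\R^3)$, a Cauchy--Schwarz/Young step absorbs $\mathcal{R}(\uu)$ into $\tfrac12\mathcal{J}_p(\uu)$ up to a residual of the form $C\|\nabla p\|_\infty^2\int_{\R^3}(\overbar{\DD}\uu)^{p(x)}\ln^2(\overbar{\DD}\uu)\dx$. The hypothesis $p^+<8/3$ is exactly what makes this logarithmic residual controllable by $\mathcal{J}_p(\uu)$ and $\|\nabla^2\uu\|_2^2$ via Gagliardo--Nirenberg interpolation together with \eqref{energy_2} (using $\ln^2 x\le C_\delta x^\delta$ to convert the log into a sub-critical power). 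Since $\overbar{\DD}\uu\ge 1$ and $p\ge 2$ give $\mathcal{J}_p(\uu)\ge\|\nabla\DD\uu\|_2^2$, componentwise Korn's inequality returns $c\|\nabla^2\uu\|_2^2$ as the effective dissipation.

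For the convection, a Gagliardo--Nirenberg--Young estimate
\[
\Bigl|\sum_{k=1}^{3}\int_{\R^3}(\partial_k\uu\cdot\nabla)\uu\cdot\partial_k\uu\dx\Bigr| \le C\|\uu\|_2^{1/4}\|\nabla\uu\|_2\|\nabla^2\uu\|_2^{7/4} \le \tfrac{c}{4}\|\nabla^2\uu\|_2^2 + C\|\uu\|_2^2\|\nabla\uu\|_2^8,
\]
combined with a continuation argument from $\|\uu_0\|_{H^1}<\varepsilon$ that keeps $\|\nabla\uu(t)\|_2$ small for all $t>0$, reduces the last term to $C\varepsilon^6(1+t)^{-3/2}\|\nabla\uu\|_2^2$ by Theorem~\ref{main_thm}. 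Fourier splitting on $B(t)=\{\xi\in\R^3:|\xi|\le r(t)\}$ with $r(t)^2=\alpha/(1+t)$ for $\alpha$ large gives
\[
\|\nabla^2\uu\|_2^2 \ge r(t)^2\|\nabla\uu\|_2^2 - r(t)^4\|\uu\|_2^2,
\]
whose subtracted term is of order $(1+t)^{-7/2}$ again by Theorem~\ref{main_thm}. Absorbing the $\varepsilon^6$ perturbation into $\tfrac12 cr(t)^2\|\nabla\uu\|_2^2$ by taking $\varepsilon$ small, the integrating factor $(1+t)^{c\alpha}$ closes the Gronwall argument and yields $\|\nabla\uu(t)\|_2^2 \le C(1+t)^{-5/2}$, which is the stated rate.

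The main difficulty I expect is the careful handling of $\mathcal{R}(\uu)$: the logarithmic factor $\ln(\overbar{\DD}\uu)$ admits no pointwise $L^\infty$ bound, so pushing it beneath the dissipation requires threading between the variable-exponent energy $\mathcal{J}_p(\uu)$ and the classical $H^2$-dissipation $\|\nabla^2\uu\|_2^2$, which is exactly where the constraint $p^+<8/3$ is consumed. The smallness of $\|\uu_0\|_{H^1}$ is used both to sustain the bootstrap keeping $\|\nabla\uu(t)\|_2$ small and to force the cubic convective nonlinearity below the dissipation level; dropping either role would require a substantially different mechanism.
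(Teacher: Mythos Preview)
Your overall architecture matches the paper's: derive a differential inequality for $\|\nabla\uu\|_2^2$ with $\|\nabla^2\uu\|_2^2$ as dissipation, use $H^1$-smallness to absorb the nonlinear terms, and then feed Theorem~\ref{main_thm} into a Fourier-splitting step exactly as you describe. The Fourier-splitting portion and the continuation/bootstrap for smallness are essentially identical to the paper's Lemmas~4.1--4.2 and the final proof.

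The gap is in your treatment of the logarithmic remainder $\mathcal{R}(\uu)$. Your Cauchy--Schwarz/Young step leaves a residual $\int_{\R^3}(\overbar{\DD}\uu)^{p(x)-2}|\DD\uu|^2\ln^2(\overbar{\DD}\uu)\dx$ (you wrote $(\overbar{\DD}\uu)^{p(x)}\ln^2(\overbar{\DD}\uu)$, which discards the crucial $|\DD\uu|^2$ factor). After converting $\ln^2$ to a power, this residual is a sum of pure $\|\nabla\uu\|_q^q$ terms. Via Gagliardo--Nirenberg and Young these split as $\varepsilon\|\nabla^2\uu\|_2^2+C\|\nabla\uu\|_2^{r}$ with $r>2$; the leftover $\|\nabla\uu\|_2^r$ is \emph{not} of the form (small)$\times\|\nabla^2\uu\|_2^2$, and writing it as (small)$\times\|\nabla\uu\|_2^2$ produces a term on the right-hand side that the Fourier-splitting coefficient $\frac{c\alpha}{1+t}\|\nabla\uu\|_2^2$ cannot absorb for large $t$. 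One can rescue this by interpolating $\|\nabla\uu\|_2^r\le\|\uu\|_2^{r/2}\|\nabla^2\uu\|_2^{r/2}$ and iterating, but this is considerably more delicate than what you sketch and is not covered by the phrase ``controllable by $\mathcal{J}_p(\uu)$ and $\|\nabla^2\uu\|_2^2$ via Gagliardo--Nirenberg together with \eqref{energy_2}''.

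The paper avoids this entirely by \emph{not} splitting off $|\nabla^2\uu|$ via Young. Instead it applies the pointwise bound $\ln(\overbar{\DD}\uu)\le C|\DD\uu|^{2/3}$ directly to the product $(\overbar{\DD}\uu)^{p(x)-2}\ln(\overbar{\DD}\uu)|\DD\uu||\nabla^2\uu|$, so that (using $p^+<\tfrac{8}{3}$ to control $(\overbar{\DD}\uu)^{p-2}\le 1+|\DD\uu|^{p^+-2}$) the whole term is bounded by $\int|\DD\uu|^{5/3}|\nabla^2\uu|\dx$. A single H\"older step $L^3\times L^6\times L^2$ and Sobolev embedding then give $C\|\nabla\uu\|_2^{2/3}\|\nabla^2\uu\|_2^2$, which is immediately a small coefficient times the dissipation. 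The convection is handled analogously as $C\|\uu\|_2^{1/2}\|\nabla\uu\|_2^{1/2}\|\nabla^2\uu\|_2^2$. This yields the clean inequality $\frac{\rm d}{{\rm d}t}\|\nabla\uu\|_2^2+\|\nabla^2\uu\|_2^2\le 0$ under smallness, with no leftover lower-order terms, and the Fourier splitting then closes in one step.
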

\end{section}

\begin{section}{Proof of Theorem \ref{main_thm}}

In this section, we aim to prove Theorem \ref{main_thm}. Note that the following lemma also holds for the case $p=p(t,x)$ and the proof for this general case is exactly same as the proof below by adding $t$ to the exponent. We first define $\GGG(\DD\uu)\defeq\left((\DDD\uu)^{p(x)-2}-1\right)\DD\uu$ and rewrite the equations \eqref{eq1} as
\begin{equation}\label{re_eq}
\partial_t\uu+(\uu\cdot\nabla)\uu-{\rm{div}}\,\GGG(\DD\uu)-\Delta\uu+\nabla \pi=0.
\end{equation}

\begin{lemma}\label{main_est}
Suppose that $p^-\geq\frac{11}{5}$ and $\uu$ is sufficiently smooth. Then there exist positive constants $C_1$, $C_2$ and $C_3$ such that for almost all time $t\in(0,T)$, the following inequality holds:
\begin{itemize}
\item {\rm{(Case\,\,$1$)}} $p^-\geq3$:
\[\int^t_0\int_{\R^3}|\GGG(\DD\uu)|\dx\ds\leq C_1,\]
\item {\rm{(Case\,\,$2$)}} $\frac{11}{5}\leq p^-<3$:
\[\int^t_0\int_{\R^3}|\GGG(\DD\uu)|\dx\ds\leq C_2 + C_3 \left(\int^t_0\|\uu(s)\|_2^{\frac{2\alpha}{2-\beta}}\ds\right)^{\frac{2-\beta}{2}},\]
\end{itemize}
where $\alpha=\frac{7-p^-}{4}$ and $\beta=\frac{5p^--11}{4}$.
\end{lemma}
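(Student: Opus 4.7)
My plan is to split $\R^3$ into $\Omega_1:=\{x:|\DD\uu(t,x)|\leq 1\}$ and $\Omega_2:=\R^3\setminus\Omega_1$ and derive pointwise estimates for $|\GGG(\DD\uu)|$ in each region. On $\Omega_1$, the elementary estimate $(1+y)^{a}-1\leq Cy$ (valid for $a=(p(x)-2)/2\in(0,1]$ by concavity, and for $a>1$ by Taylor expansion since $y=|\DD\uu|^{2}\leq 1$) yields $(\DDD\uu)^{p(x)-2}-1\leq C|\DD\uu|^{2}$, hence $|\GGG|\leq C|\DD\uu|^{3}\leq C|\DD\uu|^{2}$. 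On $\Omega_2$, $(\DDD\uu)^{2}\leq 2|\DD\uu|^{2}$ gives $(\DDD\uu)^{p(x)-2}\leq 2^{(p^{+}-2)/2}|\DD\uu|^{p(x)-2}$, so $|\GGG|\leq C|\DD\uu|^{p(x)-1}$. Since $\mathcal{I}_{p}(\uu)\geq\|\DD\uu\|_{2}^{2}$ whenever $p^{-}\geq 2$, the first energy inequality \eqref{energy_1} immediately controls the $\Omega_1$ contribution uniformly in time: $\int_{0}^{t}\int_{\Omega_{1}}|\GGG|\dx\ds\leq C\int_{0}^{t}\mathcal{I}_{p}(\uu)\ds\leq C\|\uu_{0}\|_{2}^{2}$, which is absorbed into $C_1$ or $C_2$.

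For Case 1 ($p^{-}\geq 3$), on $\Omega_2$ we use $|\DD\uu|\geq 1$ to obtain $|\DD\uu|^{p(x)-1}\leq|\DD\uu|^{p(x)}\leq(\DDD\uu)^{p(x)-2}|\DD\uu|^{2}$, whence $\int_{0}^{t}\int_{\Omega_{2}}|\GGG|\dx\ds\leq\int_{0}^{t}\mathcal{I}_{p}(\uu)\ds\leq\|\uu_{0}\|_{2}^{2}$, closing Case 1.

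For Case 2 ($11/5\leq p^{-}<3$) the exponent $p-1\in[6/5,2)$ lies below $2$, so direct Gagliardo--Nirenberg control of $\|\DD\uu\|_{p-1}(\R^3)$ is unavailable; instead I would apply the H\"older decomposition $|\DD\uu|^{p-1}=|\DD\uu|^{\alpha}\cdot|\DD\uu|^{\beta}$ with the prescribed $\alpha=(7-p^{-})/4$ and $\beta=(5p^{-}-11)/4$, so $\alpha+\beta=p^{-}-1$. These exponents are calibrated by the 3D Gagliardo--Nirenberg scaling --- the same scaling that gives $\|\nabla\uu\|_{p-1}^{p-1}\leq C\|\uu\|_{2}^{\alpha}\|\nabla^{2}\uu\|_{2}^{\beta}$ in the (valid) regime $p\geq 3$. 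Applying H\"older in space combines a factor bounded by a power of $\|\DD\uu\|_{2}$ (and therefore by $\|\uu\|_{2}$ and $\|\nabla^{2}\uu\|_{2}$ via the interpolation $\|\nabla\uu\|_{2}^{2}\leq C\|\uu\|_{2}\|\nabla^{2}\uu\|_{2}$ together with Korn) with a factor bounded by a power of $\mathcal{I}_{p}(\uu)$, using $|\Omega_2|\leq\|\DD\uu\|_{2}^{2}$ (Chebyshev) to trade the integrability deficit $2-(p-1)$ against the $L^{p}$-type control encoded in $\mathcal{I}_{p}$. A final H\"older in $t$ with conjugate exponents $\tfrac{2}{2-\beta}$ and $\tfrac{2}{\beta}$ isolates the factor $\bigl(\int_{0}^{t}\|\uu(s)\|_{2}^{2\alpha/(2-\beta)}\ds\bigr)^{(2-\beta)/2}$, whereas the conjugate factor is bounded by $\bigl(\int_{0}^{t}\mathcal{I}_{p}(\uu)\ds\bigr)^{\beta/2}\leq C\|\uu_{0}\|_{2}^{\beta}$ via \eqref{energy_1} and is absorbed into $C_3$. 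The main obstacle is precisely this Case 2 interpolation: because $p^{-}-1<2$ there is no Gagliardo--Nirenberg bound for $\|\DD\uu\|_{p-1}^{p-1}$ on all of $\R^3$, so the argument only closes after restricting to $\Omega_2$ (of finite measure by Chebyshev) and choosing the exponents $\alpha,\beta$ to match the natural scaling of $\int|\DD\uu|^{p-1}$.
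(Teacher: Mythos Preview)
Your pointwise estimates on $\Omega_1$ and $\Omega_2$ are correct, and your Case~1 argument on $\Omega_2$ is correct --- but notice that it nowhere uses $p^{-}\geq 3$. The chain
\[
|\DD\uu|^{p(x)-1}\leq|\DD\uu|^{p(x)}\leq(\DDD\uu)^{p(x)-2}|\DD\uu|^{2}
\]
on $\Omega_2$ only needs $|\DD\uu|\geq 1$ and $p(x)\geq 2$. Combined with your $\Omega_1$ bound $|\GGG|\leq C|\DD\uu|^{2}\leq C(\DDD\uu)^{p(x)-2}|\DD\uu|^{2}$, you have in fact shown $\int_{\R^3}|\GGG|\dx\leq C\,\mathcal{I}_p(\uu)$ pointwise in time for every $p^{-}\geq 2$, whence $\int_0^t\int_{\R^3}|\GGG|\dx\ds\leq C\|\uu_0\|_2^2$ by the first energy inequality alone. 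This is strictly stronger than what the lemma asserts: the Case~2 term involving $\|\uu\|_2^{2\alpha/(2-\beta)}$ is not needed.

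Your separate Case~2 discussion is therefore superfluous, and as written it is also not quite right: the decomposition $|\DD\uu|^{p-1}=|\DD\uu|^{\alpha}|\DD\uu|^{\beta}$ with $\alpha+\beta=p^{-}-1$ does not match the variable exponent $p(x)-1$ that you actually have on $\Omega_2$, and the subsequent H\"older/Chebyshev steps are left vague.

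By contrast, the paper's route reduces to $\int_{\R^3}|\DD\uu|^{p^{-}-1}\dx$ on all of $\R^3$ and then, in Case~2, invokes the Gagliardo--Nirenberg-type estimate $\|\nabla\uu\|_{p^{-}-1}^{p^{-}-1}\lesssim\|\uu\|_2^{\alpha}\|\nabla^2\uu\|_2^{\beta}$ together with the \emph{second} energy inequality \eqref{energy_2} to control $\int_0^t\|\nabla^2\uu\|_2^2\ds$. Your approach is more elementary: it avoids second derivatives entirely, uses only \eqref{energy_1}, and yields a uniform-in-time bound in both cases. The paper's form of the estimate, with the explicit $\|\uu\|_2$ factor, is what feeds into the bootstrap of Theorem~\ref{main_thm}, but since your uniform bound is stronger, it serves the same purpose there.
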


\begin{proof}
We first note that the inequality $(1+s)^{\alpha}-1\lesssim s+s^{\alpha}$ for $s\geq0$ and $\alpha>0$. Then we have
\begin{align*}
\int_{\R^3}|\GGG(\DD\uu)|\dx
&\lesssim\int_{\R^3}\left((1+|\DD\uu|)^{p(x)-2}-1\right)|\DD\uu|\dx\\
&\lesssim \int_{\R^3}\left(|\DD\uu|+|\DD\uu|^{p(x)-2}\right)|\DD\uu|\dx\\
&=\int_{\R^3}|\DD\uu|^2\dx+\int_{\R^3}|\DD\uu|^{p(x)-1}\dx\\
&\lesssim \mathcal{I}_p(\uu)+\int_{\{|\DD\uu|\geq1\}}|\DD\uu|^{p(x)-1}\dx + \int_{\{|\DD\uu|<1\}}|\DD\uu|^{p(x)-1}\dx\\
&\lesssim \mathcal{I}_p(\uu)+\int_{\R^3}|\DD\uu|^{p(x)}\dx+\int_{\R^3}|\DD\uu|^{p^--1}\dx\\
&\lesssim \mathcal{I}_p(\uu)+\int_{\R^3}|\DD\uu|^{p^--1}\dx.
\end{align*}
It remains to estimate the second term on the right-hand side. For $p^-\geq 3$, by \eqref{energy_1} and the interpolation inequality,
\[\int^t_0\|\nabla\uu(s)\|^{p^--1}_{p^--1}\ds \lesssim\int^t_0\|\nabla\uu(s)\|_2^{\frac{2}{p^--2}}  \|\nabla\uu(s)\|^{\frac{p^-(p^--3)}{p^--2}}_{p^-}\ds  \lesssim \|\nabla\uu\|_{L^2((0,T);L^2)}^{\frac{2}{p^--2}}  \|\nabla\uu\|_{L^{p^-}((0,T);L^{p^-})}^{\frac{p^-(p^--3)}{p^--2}} <\infty.  \]
For $\frac{11}{5}\leq p^-<3$, as shown in \cite{Dong}, by \eqref{energy_2} and Gagliardo--Nirenberg interpolation inequality (see, for example, \cite{GNI})
\begin{align*}
\int^t_0\|\nabla\uu(s)\|^{p^--1}_{p^--1}\ds 
& \lesssim  \int^t_0\|\uu(s)\|_2^{\alpha}\|\nabla^2\uu(s)\|_2^{\beta}\ds\\
& \lesssim \left(\int^t_0\|\uu(s)\|_2^{\frac{2\alpha}{2-\beta}}\ds\right)^{\frac{2-\beta}{2}}\left(\int^t_0\|\nabla^2\uu(s)\|_2^2\ds\right)^{\frac{\beta}{2}}\\
& \lesssim \left(\int^t_0\|\uu(s)\|_2^{\frac{2\alpha}{2-\beta}}\ds\right)^{\frac{2-\beta}{2}},
\end{align*}
where $\alpha=\frac{7-p^-}{4}$ and $\beta=\frac{5p^--11}{4}$.
\end{proof}
Furthermore, we need the following estimate, where $\widehat{f}$ denotes the Fourier transformation of $f$.
\begin{lemma}\label{second_est}
Suppose that $\uu_0\in H^1(\R^3)\cap L^1(\R^3)$ and $p^-\geq\frac{11}{5}$. Then for a strong solution $\uu$ to the equations \eqref{eq1}, \eqref{eq2} and \eqref{xdep}, we have the following:

\item {\rm{(Case\,\,$1$)}} $p^-\geq3$:
\begin{equation}\label{repeat_1}
|\widehat{\uu}(t,\xi)|\leq C|\widehat{\uu}_0(\xi)|+C|\xi|\left(1+\int^t_0\|\uu(s)\|^2_2\ds\right).
\end{equation}
\item {\rm{(Case\,\,$2$)}} $\frac{11}{5}\leq p^-<3$:
\begin{equation}\label{repeat_2}
|\widehat{\uu}(t,\xi)|\leq C|\widehat{\uu}_0(\xi)|+C|\xi|\left(1+\left(\int^t_0\|\uu(s)\|_2^{\frac{2\alpha}{2-\beta}}\ds\right)^{\frac{2-\beta}{2}}+\int^t_0\|\uu(s)\|^2_2\ds\right).
\end{equation}
\end{lemma}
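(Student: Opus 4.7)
The plan is to derive a mild formulation on the Fourier side. Applying the Leray projector $\mathbb{P}$ to the rewritten equation \eqref{re_eq} eliminates the pressure, and taking the Fourier transform in $x$ yields, pointwise in $\xi$, the scalar linear ODE
\[\partial_t\widehat{\uu}(t,\xi)+|\xi|^2\widehat{\uu}(t,\xi)=\widehat{\mathbb{P}}(\xi)\bigl(\widehat{\op{div}\,\GGG(\DD\uu)}(t,\xi)-\widehat{(\uu\cdot\nabla)\uu}(t,\xi)\bigr),\]
where $\widehat{\mathbb{P}}(\xi)=I-\xi\otimes\xi/|\xi|^2$ has operator norm one. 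Duhamel's formula then gives
\[\widehat{\uu}(t,\xi)=e^{-|\xi|^2 t}\widehat{\uu}_0(\xi)+\int_0^t e^{-|\xi|^2(t-s)}\widehat{\mathbb{P}}(\xi)\bigl(\widehat{\op{div}\,\GGG(\DD\uu)}-\widehat{(\uu\cdot\nabla)\uu}\bigr)(s,\xi)\ds.\]
Both the heat-kernel factor and $|\widehat{\mathbb{P}}(\xi)|$ are bounded by one, so it suffices to control the two nonlinear Fourier integrals in $L^\infty_\xi$.

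The key observation is that both nonlinearities carry a divergence structure, which produces the essential prefactor $|\xi|$. For the convective term, the incompressibility condition $\op{div}\,\uu=0$ lets me rewrite $(\uu\cdot\nabla)\uu=\op{div}(\uu\otimes\uu)$, so the Hausdorff--Young bound $\|\widehat{f}\|_\infty\leq\|f\|_1$ yields
\[|\widehat{(\uu\cdot\nabla)\uu}(s,\xi)|=|\xi\cdot\widehat{\uu\otimes\uu}(s,\xi)|\leq|\xi|\,\|\uu(s)\|_2^2.\]
Time integration produces the term $C|\xi|\int_0^t\|\uu(s)\|_2^2\ds$ appearing in both \eqref{repeat_1} and \eqref{repeat_2}. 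Similarly, $\widehat{\op{div}\,\GGG(\DD\uu)}=i\xi\cdot\widehat{\GGG(\DD\uu)}$, whence $|\widehat{\op{div}\,\GGG(\DD\uu)}(s,\xi)|\leq|\xi|\,\|\GGG(\DD\uu)(s)\|_1$.

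Integrating in $s$ and invoking Lemma \ref{main_est}, the stress contribution is bounded in Case 1 by $C|\xi|$ (from the uniform constant $C_1$) and in Case 2 by $C|\xi|\bigl(1+(\int_0^t\|\uu(s)\|_2^{2\alpha/(2-\beta)}\ds)^{(2-\beta)/2}\bigr)$. Combining these with the trivial bound $|e^{-|\xi|^2 t}\widehat{\uu}_0(\xi)|\leq|\widehat{\uu}_0(\xi)|$ for the linear term gives exactly \eqref{repeat_1} and \eqref{repeat_2}. There is no real obstacle here: the mild formulation and the divergence form of both nonlinearities do all the work, and Lemma \ref{main_est} supplies the only non-trivial ingredient, namely the $L^1_{t,x}$ bound on $\GGG(\DD\uu)$. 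The only care needed is to justify the Fourier/Duhamel manipulations within the strong-solution class $\uu\in L^\infty(0,T;H^1)\cap L^2(0,T;H^2)$ with $\partial_t\uu\in L^2(Q_T)$, which is immediate from the stated regularity.
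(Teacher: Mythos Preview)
Your proposal is correct and follows essentially the same route as the paper: Duhamel's formula on the Fourier side, the divergence structure of both nonlinearities to extract the factor $|\xi|$, the Hausdorff--Young bound $\|\widehat{f}\|_\infty\le\|f\|_1$, and Lemma~\ref{main_est} for the $L^1_{t,x}$ control of $\GGG(\DD\uu)$. The only difference is that you eliminate the pressure up front via the Leray projector, whereas the paper keeps $\nabla\pi$ in the forcing, recovers it from $\Delta\pi=\sum_{i,j}\partial_i\partial_j(-\uu_i\uu_j+\GGG(\DD\uu)_{ij})$, and bounds $|\widehat{\nabla\pi}(t,\xi)|\le|\xi|\|\GGG(\DD\uu)\|_1+|\xi|\|\uu(t)\|_2^2$; your version is slightly cleaner but the two are equivalent.
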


\begin{proof}
If we take the Fourier transformation on \eqref{re_eq}, we have
\begin{equation}\label{ode}
\widehat{\uu}_t+|\xi|^2\widehat{\uu}=F(t,\xi)\quad{\rm{and}}\quad\widehat{\uu}_0(\xi)\defeq\widehat{\uu}(0,\xi)=\widehat{\uu}_0,
\end{equation}
where
\begin{equation}\label{aid_1}
F(t,\xi)\defeq\widehat{\nabla\cdot\GGG}(t,\xi)-\widehat{(\uu\cdot\nabla)\uu}(t,\xi)-\widehat{\nabla\pi}(t,\xi).
\end{equation}
Note that
\begin{equation}\label{ini_bd}
|\widehat{\uu}_0(\xi)|\leq \bigg|\int_{\R^3}e^{-ix\cdot\xi}\uu_0(x)\dx\bigg|\leq\int_{\R^3}|\uu_0(x)|\dx\leq C,
\end{equation}
For the stress tensor term,
\begin{equation}\label{aid_2}
\big|\widehat{\nabla\cdot\GGG}(t,\xi)\big| = \bigg|\int_{\R^3}e^{-ix\cdot\xi}\nabla\cdot\GGG(\DD\uu)\dx\bigg| \leq|\xi|\int_{\R^3}|\GGG(\DD\uu)|\dx.
\end{equation}
Next, by H\"older's inequality with \eqref{eq2}, we have
\begin{equation}\label{aid_3}
\big|\widehat{(\uu\cdot\nabla)\uu}(t,\xi)\big|=\bigg|\int_{\R^3}e^{-ix\cdot\xi}\nabla\cdot(\uu\otimes\uu)\dx\bigg|\leq|\xi|\|\uu(t)\otimes\uu(t)\|_1\leq|\xi|\|\uu(t)\|_2^2.
\end{equation}
Finally, taking divergence operator on \eqref{eq1} gives
\[\Delta\pi=\sum_{i,j}\frac{\partial^2}{\partial x_i \partial x_j}(-\uu_i\uu_j+\GGG(\DD\uu)_{ij}),\]
and therefore, by H\"older's inequality, we have
\begin{equation}\label{aid_4}
|\widehat{\nabla\pi}(t,\xi)|\leq|\xi|\|\GGG(\DD\uu)\|_1+|\xi|\|\uu(t)\otimes\uu(t)\|_1
\leq|\xi|\|\GGG(\DD\uu)\|_1+|\xi|\|\uu(t)\|_2^2.
\end{equation}
Now, it follows from \eqref{ode} that
\[\widehat{\uu}(t,\xi)=e^{-|\xi|^2t}\widehat{\uu}_0(\xi)+\int^t_0F(s,\xi)e^{-|\xi|^2(t-s)}\ds.\]
Therefore, by \eqref{aid_1}-\eqref{aid_4} and Lemma \ref{main_est},  we obtain the desired result.
\end{proof}

\begin{proof}[Proof of Theorem \ref{main_thm}]
We shall use the standard Fourier splitting method to prove the theorem. From the energy inequality and Korn's inequality with the condition $p^-\geq\frac{11}{5}>2$, we have
\[\frac{1}{2}\frac{{\rm{d}}}{\dt}\int_{\R^3}|\uu(t,x)|^2\dx+C\int_{\R^3}|\nabla\uu(t,x)|^2\dx\leq0.\]
By Plancherel's theorem, it follows that
\[\frac{1}{2}\frac{{\rm{d}}}{\dt}\int_{\R^3}|\widehat{\uu}(t,\xi)|^2{\rm{d}}\xi+C\int_{\R^3}|\xi|^2|\widehat{\uu}(t,\xi)|^2{\rm{d}}\xi\leq0.\]
Next, let us assume that $f(t)$ is a smooth function with $f(0)=1$, $f(t)>0$ and $f'(t)>0$. Then for some constant $C_0>0$, we have
\begin{equation}\label{plan}
\frac{{\rm{d}}}{\dt}\left(f(t)\int_{\R^3}|\widehat{\uu}(t,\xi)|^2{\rm{d}}\xi\right)+C_0f(t)\int_{\R^3}|\xi|^2|\widehat{\uu}(t,\xi)|^2{\rm{d}}\xi \leq f'(t)\int_{\R^3}|\widehat{\uu}(t,\xi)|^2{\rm{d}}\xi.
\end{equation}
If we define the set $L(t)=\{\xi\in\R^3:C_0|\xi|^2f(t)\leq f'(t)\}$ where $C_0>0$ is the constant appearing in \eqref{plan}, we obtain
\[C_0f(t)\int_{\R^3}|\xi|^2|\widehat{\uu}(t,\xi)|^2{\rm{d}}\xi\geq C_0f(t)\int_{L(t)^{c}}|\xi|^2|\widehat{\uu}(t,\xi)|^2{\rm{d}}\xi\geq f'(t)\int_{\R^3}|\widehat{\uu}(t,\xi)|^2{\rm{d}}\xi-f'(t)\int_{L(t)}|\widehat{\uu}(t,\xi)|^2{\rm{d}}\xi,\]
and therefore, from \eqref{plan}, we deduce
\[\frac{\rm{d}}{\dt}\left( f(t)\int_{\R^3}|\widehat{\uu}(t,\xi)|^2{\rm{d}}\xi\right)\leq f'(t)\int_{L(t)}|\widehat{\uu}(t,\xi)|^2{\rm{d}}\xi.\]
Integrating the above inequality over $(0,t)$ yields
\begin{equation}\label{mid_est}
f(t)\int_{\R^3}|\widehat{\uu}(t,\xi)|^2{\rm{d}}\xi\leq\int_{\R^3}|\widehat{\uu}_0(\xi)|^2{\rm{d}}\xi+\int^t_0 f'(s)\int_{L(s)}|\widehat{\uu}(s,\xi)|^2{\rm{d}}\xi\ds.
\end{equation}
\item {\rm{(Case\,\,$1$)}} $p^-\geq3$:
Now we set $f(t)=(1+t)^3$. Then by Lemma \ref{second_est} with \eqref{ini_bd}, \eqref{mid_est} and the energy inequality \eqref{energy_1}, we obtain that
\begin{align*}
(1+t)^3\int_{\R^3}|\widehat{\uu}(t,\xi)|^2{\rm{d}}\xi
& \leq\int_{\R^3}|\widehat{\uu}_0(\xi)|^2{\rm{d}}\xi+C\int^t_0(1+s)^2\int_{L(s)}|\widehat{\uu}_0(\xi)|^2{\rm{d}}\xi\ds\\
& \hspace{5mm}+C\int^t_0(1+s)^2\int_{L(s)}|\xi|^2\left(1+\int^s_0\|\uu(\tau)\|^2_2{\rm{d}}\tau\right)^2{\rm{d}}\xi\ds\\
& \leq C+C\int^t_0(1+s)^{\frac{1}{2}}\ds+C\int^t_0(1+s)^{-\frac{1}{2}}\ds +C\int^t_0(1+s)^{\frac{3}{2}}\ds\\
& \leq C+C(1+t)^{\frac{3}{2}}+C(1+t)^{\frac{1}{2}}+C(1+t)^{\frac{5}{2}}.
\end{align*}
From Plancherel's theorem, we have
\begin{equation}\label{intermed}
\|\uu(t)\|^2_2\leq C(1+t)^{-\frac{1}{2}}.
\end{equation}
Now if we substitute \eqref{intermed} into \eqref{repeat_1}, and repeat the same process, we finally obtain the desired estimate.
\item {\rm{(Case\,\,$2$)}} $\frac{11}{5}\leq p^-<3$:
In this case, we first note that $\frac{1}{2}<\frac{2-\beta}{2}<1$ and $\frac{4\alpha}{2-\beta}>2$. Then by H\"older's inequality and \eqref{energy_1}, we obtain
\begin{align*}
\int^t_0(  &  1+s)^2\int_{L(s)}|\xi|^2\left(1+\left(\int^s_0\|\uu(\tau)\|_2^{\frac{2\alpha}{2-\beta}}{\rm{d}}\tau\right)^{\frac{2-\beta}{2}}+\int^s_0\|\uu(\tau)\|^2_2{\rm{d}}\tau\right)^2{\rm{d}}\xi\ds\\
&\leq C\int^t_0(1+s)^2\int_{L(s)}|\xi|^2\,{\rm{d}}\xi\ds + C\int^t_0(1+s)^2\int_{L(s)}|\xi|^2\left(\int^s_0\|\uu(\tau)\|_2^{\frac{2\alpha}{2-\beta}}\,{\rm{d}}\tau\right)^{2-\beta}\,{\rm{d}}\xi\ds\\
&\hspace{5mm}+C\int^t_0(1+s)^2\int_{L(s)}|\xi|^2\left(\int^s_0\|\uu(\tau)\|^2_2\,{\rm{d}}\tau\right)^2\,{\rm{d}}\xi\ds\\
&\leq C\int^t_0(1+s)^2\int_{L(s)}|\xi|^2\,{\rm{d}}\xi\ds + C\int^t_0(1+s)^2\int_{L(s)}|\xi|^2s^{\frac{2-\beta}{2}}\left(\int^s_0\|\uu(\tau)\|_2^{\frac{4\alpha}{2-\beta}}\,{\rm{d}}\tau\right)^{\frac{2-\beta}{2}}\,{\rm{d}}\xi\ds\\
&\hspace{5mm}+C\int^t_0(1+s)^2\int_{L(s)}|\xi|^2s\left(\int^s_0\|\uu(\tau)\|^4_2\,{\rm{d}}\tau\right)\,{\rm{d}}\xi\ds\\
&\leq C\int^t_0(1+s)^2\int_{L(s)}|\xi|^2\,{\rm{d}}\xi\ds +C\int^t_0(1+s)^2\int_{L(s)}|\xi|^2s^{\frac{2-\beta}{2}}\left(\int^t_0\|\uu(\tau)\|_2^2\,{\rm{d}}\tau+C\right)\,{\rm{d}}\xi\ds\\
&\hspace{5mm}+C\int^t_0(1+s)^2\int_{L(s)}|\xi|^2s\left(\int^t_0\|\uu(\tau)\|^2_2\,{\rm{d}}\tau\right)\,{\rm{d}}\xi\ds\\
&\leq C(1+t)^{\frac{3}{2}}+C(1+t)^{\frac{3}{2}}\left(\int^t_0\|\uu(\tau)\|^2_2\,{\rm{d}}\tau\right).
\end{align*}
Now, by again from Lemma \ref{second_est}, \eqref{ini_bd} and \eqref{mid_est} with $f(t)=(1+t)^3$, we have that
\[(1+t)^3\|\uu(t)\|^2_2 = (1+t)^3\int_{\R^3}|\widehat{\uu}(t,\xi)|^2{\rm{d}}\xi \leq C(1+t)^{\frac{3}{2}}+C(1+t)^{\frac{3}{2}}\left(\int^t_0\|\uu(\tau)\|^2_2\,{\rm{d}}\tau\right).\]
This yields
\[(1+t)^{\frac{3}{2}}\|\uu(t)\|^2_2\leq C+C\int^t_0(1+\tau)^{\frac{3}{2}}\|\uu(\tau)\|^2_2(1+\tau)^{-\frac{3}{2}}\,{\rm{d}}\tau,\]
and therefore, by Gronwall's inequality, we obtain the desired decay estimate.
\end{proof}
\end{section}

\begin{section}{Proof of Theorem \ref{second_thm}}
We begin with the following a priori estimate.
\begin{lemma}\label{small_ode}
Assume that $p\in W^{1,\infty}(\R^3)$ with $p^+<\frac{8}{3}$. Then there exists a small number $\delta>0$ such that if
\begin{equation}\label{small_ass}
\sup_{0\leq t\leq T}\|\uu(t)\|_{H^1} < 2\delta,
\end{equation}
we have the following differential inequality: for almost all time $t\in(0,T)$,
\begin{equation}\label{diff_eq}
\frac{\rm{d}}{\dt}\|\nabla\uu(t)\|^2_2+\|\nabla^2\uu(t)\|^2_2\leq 0.
\end{equation}
\end{lemma}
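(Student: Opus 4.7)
The plan is to perform an $H^1$ energy estimate: test the equation against $-\Delta\uu$, exploit the monotonicity of the stress tensor (valid since $p \ge p^- \ge 11/5 > 2$) to extract a coercive quantity $\mathcal{J}_p(\uu)$ dominating $\|\nabla^2\uu\|_2^2$, and then absorb the convective and variable-exponent error terms using the smallness $\|\uu\|_{H^1}<2\delta$ together with $p\in W^{1,\infty}$ and $p^+<8/3$.

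Concretely, I would take the $L^2$ inner product of \eqref{eq1} with $-\Delta\uu$. The pressure vanishes after integration by parts via the divergence-free condition \eqref{eq2}; the time derivative gives $\tfrac{1}{2}(d/dt)\|\nabla\uu\|_2^2$; and one integration by parts on the viscous term yields $\int \partial_k \SSS_{ij}(\DD\uu,x)\,\partial_k(\DD\uu)_{ij}\,dx$. The chain rule splits $\partial_k\SSS$ into a $\DD\uu$-derivative piece, which by the monotonicity of $\DD\mapsto(1+|\DD|^2)^{(p-2)/2}\DD$ for $p\ge 2$ is bounded below by $\mathcal{J}_p(\uu) \ge c\|\nabla^2\uu\|_2^2$ (using $(\DDD\uu)^{p(x)-2}\ge 1$ together with the Plancherel identity $\|\nabla\DD\uu\|_2 = \tfrac{1}{\sqrt 2}\|\nabla^2\uu\|_2$ for divergence-free fields), and a variable-exponent piece involving $(\partial_k p)\log(\DDD\uu)$. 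The convective term reduces, after one integration by parts using \eqref{eq2}, to $-\int\partial_k\uu_j\partial_j\uu_i\partial_k\uu_i\,dx$, which the Gagliardo-Nirenberg inequality in $\R^3$ majorizes by $C\|\nabla\uu\|_2^{3/2}\|\nabla^2\uu\|_2^{3/2}$.

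Both error terms should then be dominated by a small multiple of $\|\nabla^2\uu\|_2^2$. For the convective bound, Young's inequality combined with $\|\nabla\uu\|_2<2\delta$ and the interpolation $\|\nabla\uu\|_2^2\le \|\uu\|_2\|\nabla^2\uu\|_2$ (plus the $L^2$ energy bound $\|\uu\|_2\le 2\delta$) converts $C\|\nabla\uu\|_2^{3/2}\|\nabla^2\uu\|_2^{3/2}$ into a contribution of the form $C_0(\delta)\|\nabla^2\uu\|_2^2$ with $C_0(\delta)\to 0$ as $\delta\to 0$. For the variable-exponent piece, I would pull out $\|\nabla p\|_\infty$, absorb the logarithm via $\log(\DDD\uu)\le C_\epsilon(\DDD\uu)^\epsilon$ for arbitrarily small $\epsilon>0$, apply Cauchy-Schwarz to factor out $\mathcal{J}_p(\uu)^{1/2}$, and bound the remaining integral $\int(\DDD\uu)^{p(x)-2+\epsilon}|\DD\uu|^2\,dx$ through Sobolev embedding $H^1(\R^3)\hookrightarrow L^6(\R^3)$ combined with interpolation against $\|\DD\uu\|_2\le 2\delta$. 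The restriction $p^+<8/3$ enters precisely here: it keeps the effective power of $\DD\uu$ strictly below the Sobolev endpoint in $\R^3$, which permits the closure of the estimate. Summing all bounds and choosing $\delta$ small enough then yields the claimed differential inequality.

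The main obstacle I expect is the variable-exponent correction term: one must simultaneously tame the logarithm $\log(\DDD\uu)$, handle the spatially varying exponent $p(x)$, and stay within the admissible Sobolev range in $\R^3$. The quantitative threshold $p^+<8/3$ is exactly the constraint under which these three requirements can be reconciled, and verifying this balance in detail is the technical crux of the lemma.
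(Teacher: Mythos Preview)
Your overall architecture matches the paper's: test against $-\Delta\uu$, use the ellipticity of $\DD\mapsto(\DDD)^{p-2}\DD$ (valid for $p\ge 2$) to put $\mathcal{J}_p(\uu)\ge c\|\nabla^2\uu\|_2^2$ on the left, and show that the convective and $\nabla p$--error terms are dominated by $(\text{small})\cdot\|\nabla^2\uu\|_2^2$ under the smallness hypothesis. Your handling of the convective term is equivalent to the paper's (the paper writes $\|\uu\|_3\|\nabla\uu\|_6\|\nabla^2\uu\|_2$ and interpolates $\|\uu\|_3$; you write $\|\nabla\uu\|_3^3$ and interpolate $\|\nabla\uu\|_2$; both land on $\|\uu\|_2^{1/2}\|\nabla\uu\|_2^{1/2}\|\nabla^2\uu\|_2^2$).

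There is, however, a genuine gap in your treatment of the variable-exponent term. You bound the logarithm by $\log(\DDD\uu)\le C_\epsilon(\DDD\uu)^\epsilon$ and then Cauchy--Schwarz/Young to split off $\mathcal{J}_p(\uu)$, leaving $\int(\DDD\uu)^{p(x)-2+2\epsilon}|\DD\uu|^2\,dx$ on the right. Because $(\DDD\uu)^{p(x)-2+2\epsilon}\ge 1$, this integral is bounded \emph{below} by $\|\DD\uu\|_2^2$; after Young you are stuck with a strictly positive remainder of order $\|\nabla\uu\|_2^2\sim\delta^2$ (or, if you interpolate once more, of order $\delta\|\nabla^2\uu\|_2$, which by a further Young still leaves $C\delta^2$). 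Such a constant cannot be absorbed into $\|\nabla^2\uu\|_2^2$, so the inequality $\tfrac{d}{dt}\|\nabla\uu\|_2^2+\|\nabla^2\uu\|_2^2\le 0$ does not close.

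The paper avoids this by using the sharper bound $\log(\DDD\uu)\le C_\alpha|\DD\uu|^\alpha$ (which \emph{vanishes} at $\DD\uu=0$, unlike $(\DDD\uu)^\epsilon$) and then a direct three-factor H\"older estimate rather than Cauchy--Schwarz/Young. Writing the error as $\int(\DDD\uu)^{p(x)-2}\log(\DDD\uu)|\DD\uu||\nabla^2\uu|\,dx\lesssim\int|\DD\uu|^{2/3}|\DD\uu||\nabla^2\uu|\,dx$, one gets
\[
\big\||\DD\uu|^{2/3}\big\|_3\,\|\DD\uu\|_6\,\|\nabla^2\uu\|_2
=\|\DD\uu\|_2^{2/3}\,\|\DD\uu\|_6\,\|\nabla^2\uu\|_2
\lesssim \|\nabla\uu\|_2^{2/3}\,\|\nabla^2\uu\|_2^2,
\]
which \emph{is} of the required form $(\text{small})\|\nabla^2\uu\|_2^2$. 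The condition $p^+<\tfrac{8}{3}$ enters precisely because $p^+-2<\tfrac{2}{3}$, so the factor $(\DDD\uu)^{p(x)-2}\log(\DDD\uu)$ can indeed be controlled by $|\DD\uu|^{2/3}$ (choose $\alpha\le \tfrac{8}{3}-p^+$). Replace your log bound and your Cauchy--Schwarz step by this direct estimate and the argument closes.
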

\begin{proof}
We shall derive some formal inequalities which are essential for the correct arguments. For the detailed arguments, see for example, \cite{strong_sol_per, Malek}. We first differentiate \eqref{eq1} formally with respect to the spatial variable $x_j$ and take scalar product with $\frac{\partial\uu}{\partial x_j}$. Summing over $j=1,2,3$ yields the following a priori estimate (see \cite{strong_sol_per}):
\begin{align*}
\frac{1}{2}\frac{\rm{d}}{\dt}\|&\nabla\uu\|^2_2+\|\nabla^2\uu\|^2_2+\mathcal{J}_p(u)\\
& \lesssim\int_{\R^3}(\uu\cdot\nabla)\uu\cdot\Delta\uu\dx+\int_{\R^3}|\nabla p|(\DDD\uu)^{p(x)-2}\log(\DDD\uu)|\DD\uu||\nabla^2\uu|\dx\\
& \lesssim \|\uu\|_3\|\nabla\uu\|_6\|\nabla^2\uu\|_2+\int_{\R^3}\log(\DDD\uu)|\DD\uu||\nabla^2\uu|\dx\\
&\,\,\,\,\,+\int_{\R^3}|\DD\uu|^{p^+-2}\log(\DDD\uu)|\DD\uu||\nabla^2\uu|\dx\\
& \defeq {\rm{I}}_1+{\rm{I}}_2+{\rm{I}}_3.
\end{align*}
Note that the logarithmic term appears above when we differentiate the stress tensor with the variable exponent $p(x)$. By the interpolation inequality and the Sobolev embedding,
\[{\rm{I}}_1\lesssim\|\uu\|_2^{\frac{1}{2}}\|\uu\|_6^{\frac{1}{2}}\|\nabla\uu\|_6\|\nabla^2\uu\|_2\lesssim\|\uu\|^{\frac{1}{2}}_2\|\nabla\uu\|^{\frac{1}{2}}_2\|\nabla^2\uu\|^2_2.\]
Next, by the inequality $\log(\DDD\uu)\leq C_{\alpha}|\DD\uu|^{\alpha}$ for any $0<\alpha\leq 1$, we have
\[{\rm{I}}_2\lesssim\int_{\R^3}|\DD\uu|^{\frac{2}{3}}|\DD\uu||\nabla^2\uu|\dx\lesssim\||\DD\uu|^{\frac{2}{3}}\|_3\|\DD\uu\|_6\|\nabla^2\uu\|_2\lesssim\|\nabla\uu\|_2^{\frac{2}{3}}\|\nabla^2\uu\|^2_2.\]
Finally, due to the condition $p^+<\frac{8}{3}$,
\[{\rm{I}}_3\lesssim\int_{\R^3}|\DD\uu|^{\frac{2}{3}}|\DD\uu||\nabla^2\uu|\dx\lesssim\|\nabla\uu\|^{\frac{2}{3}}_2\|\nabla^2\uu\|^2_2.\]
Altogether, we conclude that there exist positive constants $C_1$ and $C_2$ such that
\[\frac{\rm{d}}{\dt}\|\nabla\uu\|^2_2+\|\nabla^2\uu\|^2_2\leq C_1\|\uu\|^{\frac{1}{2}}_2\|\nabla\uu\|^{\frac{1}{2}}_2\|\nabla^2\uu\|^2_2+C_2\|\nabla\uu\|^{\frac{2}{3}}_2\|\nabla^2\uu\|^2_2,\]
and hence
\[\frac{\rm{d}}{\dt}\|\nabla\uu\|^2_2+(1-C_1\|\uu\|^{\frac{1}{2}}_2\|\nabla\uu\|^{\frac{1}{2}}_2-C_2\|\nabla\uu\|^{\frac{2}{3}}_2)\|\nabla^2\uu\|^2_2\leq0.\]
Therefore, we obtain the desired inequality if $\sup_{0\leq t \leq T}\|\uu(t)\|_{H^1}<2\delta$ for sufficiently small $\delta>0$.
\end{proof}

\begin{lemma}\label{smallness}
Assume that $p\in W^{1,\infty}(\R^3)$ with $\frac{11}{5}\leq p^-\leq p^+<\frac{8}{3}$. Then there exists a small number $\varepsilon>0$ such that if $\|\uu_0\|_{H^1}<\varepsilon$, we have
\begin{equation}\label{small_verify}
\sup_{0\leq t \leq T}\|\uu(t)\|_{H^1} < 2\delta,
\end{equation}
where $\delta>0$ is the constant defined in Lemma \ref{small_ode}.
\end{lemma}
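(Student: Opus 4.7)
The plan is to close a standard continuity (bootstrap) argument built on top of Lemma \ref{small_ode}. Since $\uu$ is a strong solution, standard regularity of $\uu\in L^\infty((0,T);H^1)\cap L^2((0,T);H^2)$ with $\partial_t\uu\in L^2(Q_T)$ yields that $t\mapsto \|\uu(t)\|_{H^1}$ is continuous on $[0,T]$, so the set on which the smallness hypothesis \eqref{small_ass} holds is relatively open.

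Given $\varepsilon<\delta$ to be fixed below, define
\[
T^*\defeq\sup\set{\,t\in[0,T]\;:\;\|\uu(s)\|_{H^1}<2\delta\ \text{ for every }s\in[0,t]\,}.
\]
Since $\|\uu_0\|_{H^1}<\varepsilon<\delta<2\delta$, continuity gives $T^*>0$. I then want to show that $T^*=T$. Assume toward a contradiction that $T^*<T$. On $[0,T^*]$ the hypothesis \eqref{small_ass} of Lemma \ref{small_ode} is satisfied, so the differential inequality \eqref{diff_eq} applies. Integrating in time yields $\|\nabla\uu(t)\|_2\le\|\nabla\uu_0\|_2$ on $[0,T^*]$, while the basic energy inequality \eqref{energy_1} gives $\|\uu(t)\|_2\le\|\uu_0\|_2$. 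Combining the two estimates,
\[
\|\uu(t)\|_{H^1}^2=\|\uu(t)\|_2^2+\|\nabla\uu(t)\|_2^2\le\|\uu_0\|_{H^1}^2<\varepsilon^2\qquad\text{for all }t\in[0,T^*].
\]

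Choosing $\varepsilon\le\delta$ (for instance $\varepsilon=\delta$) we then have $\|\uu(t)\|_{H^1}<\delta<2\delta$ on $[0,T^*]$, a strict inequality that, by continuity of $t\mapsto\|\uu(t)\|_{H^1}$, propagates to a small right neighborhood of $T^*$. This contradicts the maximality of $T^*$, so $T^*=T$ and \eqref{small_verify} holds on the whole interval.

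The argument is standard; the only subtlety is verifying the time-continuity of $\|\uu(\cdot)\|_{H^1}$ needed to open up the set, and ensuring that the bound produced by Lemma \ref{small_ode} is strictly better than the bound assumed in the bootstrap (a factor of $2$ cushion between $\varepsilon$ and $2\delta$), both of which are handled by taking $\varepsilon$ strictly smaller than $\delta$. No additional estimate on the stress tensor is required beyond what already appears in Lemma \ref{small_ode}, since the condition $\tfrac{11}{5}\le p^-\le p^+<\tfrac{8}{3}$ feeds in directly through that lemma.
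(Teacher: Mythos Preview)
Your argument is correct, and in fact cleaner than the paper's own proof, but it proceeds along a genuinely different line. The paper does \emph{not} invoke the continuity of $t\mapsto\|\uu(t)\|_{H^1}$ to open up a bootstrap set. Instead it first derives, independently of any smallness assumption, the auxiliary differential inequality
\[
\frac{{\rm d}}{{\rm d}t}\|\nabla\uu\|_2^2\le C\bigl(1+\|\nabla\uu\|_{p^-}^{p^-}\bigr)\|\nabla\uu\|_2^2,
\]
obtained by estimating the logarithmic stress terms with Young's inequality and the bound $\|\nabla\uu\|_3^3\le C_\varepsilon\|\nabla\uu\|_{p^-}^{p^-}\|\nabla\uu\|_2^2+\varepsilon\,\mathcal{J}_p(\uu)$ (valid for $p^-\ge\tfrac{11}{5}$). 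Gronwall together with the time-integrability of $\|\nabla\uu\|_{p^-}^{p^-}$ from \eqref{energy_1} then yields a fixed short time $T^*$ on which $\|\nabla\uu(t)\|_2^2\le 2\|\nabla\uu_0\|_2^2$; Lemma~\ref{small_ode} is applied on $[0,T^*]$ to bring $\|\nabla\uu(T^*)\|_2$ back below $\|\nabla\uu_0\|_2$, and the argument is restarted from $T^*$ and iterated in steps of length $T^*$. Your route bypasses the auxiliary inequality entirely: once $\uu\in C([0,T];H^1)$ is granted (which does follow from $\uu\in L^2_tH^2$ and $\partial_t\uu\in L^2_tL^2$), Lemma~\ref{small_ode} alone closes a standard continuation argument, with the factor-of-two margin between $\varepsilon$ and $2\delta$ giving the needed strict improvement. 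The trade-off is that the paper's approach is self-contained at the level of energy estimates and does not rely on the $C_tH^1$ regularity, while yours is shorter and exposes more clearly that nothing beyond Lemma~\ref{small_ode} and \eqref{energy_1} is needed. One minor remark: Lemma~\ref{small_ode} itself only uses $p^+<\tfrac{8}{3}$, so in your argument the constraint $p^-\ge\tfrac{11}{5}$ enters only through the existence of the strong solution and \eqref{energy_1}, not ``through that lemma'' as you write.
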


\begin{proof}
With the same argument as above and by H\"older's inequality and Young's inequality, we have 
\begin{align*}
\frac{1}{2}\frac{\rm{d}}{\dt}\|&\nabla\uu\|^2_2+\|\nabla^2\uu\|^2_2+\mathcal{J}_p(\uu)\\
& \leq\|\nabla\uu\|^3_3+C\int_{\R^3}(\DDD\uu)^{p(x)-2}\log(\DDD\uu)|\DD\uu||\nabla^2\uu|\dx\\
& \leq\|\nabla\uu\|^3_3+\varepsilon\mathcal{J}_p(u)+C\int_{\R^3}(\DDD\uu)^{p^+}\log^2(\DDD\uu)\\
& \leq\|\nabla\uu\|^3_3+\varepsilon\mathcal{J}_p(u)+C\int_{\R^3}\log^2(\DDD\uu)\dx+C\int_{\R^3}|\DD\uu|^{\frac{8}{3}}\log^2(\DDD\uu)\dx\\
& \leq C\|\nabla\uu\|^2_2+C\|\nabla\uu\|^3_3+\varepsilon\mathcal{J}_p(u).
\end{align*}
Next, for $\frac{11}{5}\leq p^-$, we have that (see \cite{Malek})
\[\|\nabla\uu\|^3_3 \leq C_{\varepsilon}\|\nabla\uu\|^{p^-}_{p^-}\|\nabla\uu\|^2_2+\varepsilon\mathcal{J}_{p^-}(\uu)\leq C_{\varepsilon}\|\nabla\uu\|^{p^-}_{p^-}\|\nabla\uu\|^2_2+\varepsilon\mathcal{J}_{p}(\uu).\]
Therefore, we finally have
\[\frac{\rm{d}}{\dt}\|\nabla\uu\|^2_2\leq C(1+\|\nabla\uu\|^{p^-}_{p^-})\|\nabla\uu\|^2_2,\]
which yields by Gronwall's inequality
\[\|\nabla\uu\|^2_2\leq\|\nabla\uu_0\|^2_2\exp\left(\int^t_0(1+\|\nabla\uu(s)\|^{p^-}_{p^-})\ds\right).\]
Thanks to \eqref{energy_1}, $\|\nabla\uu(t)\|^{p^-}_{p^-}$ is integrable with respect to time, and hence, there exists small $T^*>0$ such that
\begin{equation}\label{sol_ini_est}
\sup_{0\leq t \leq T^*}\|\nabla\uu\|^2_2\leq 2\|\nabla\uu_0\|^2_2.
\end{equation} 
Now, suppose that $\|\uu_0\|_{H^1}<\frac{\delta}{\sqrt{2}}$. Then by \eqref{sol_ini_est} and \eqref{energy_1},
\begin{equation}\label{real_small}
\sup_{0\leq t \leq T^*}\|\uu\|_{H^1}<2\delta.
\end{equation}
Then by Lemma \ref{small_ode},
\[\frac{\rm{d}}{\dt}\|\nabla\uu\|^2_2\leq0,\]
which implies that
\begin{equation}\label{mid_step_1}
\|\nabla\uu(T^*)\|^2_2\leq\sup_{0\leq t \leq T^*}\|\nabla\uu\|^2_2\leq\|\nabla\uu_0\|^2_2<\frac{\delta^2}{2}.
\end{equation}
Next, we consider the original problem \eqref{eq1}--\eqref{eq2} for $t\geq T^*$ with the initial data $\uu(T^*)$. With the same argument as above,
\[\sup_{T^*\leq t \leq 2T^*}\|\nabla\uu\|^2_2\leq 2\|\nabla\uu(T^*)\|^2_2<\delta^2,\]
and hence
\[\sup_{T^*\leq t \leq 2T^*}\|\uu\|_{H^1}<2\delta.\]
By Lemma \ref{small_ode} again, we have
\[\|\nabla\uu(2T^*)\|^2_2\leq\sup_{T^*\leq t \leq 2T^*}\|\nabla\uu\|^2_2\leq\|\nabla\uu(T^*)\|^2_2<\frac{\delta^2}{2}.\]
If we repeat the same process for $(n-1)T^*<t<nT^*$ with $n\in\mathbb{N}$, we finally obtain that
\[\sup_{0\leq t \leq T}\|\uu\|_{H^1}<2\delta.\]
\end{proof}

\begin{proof}[Proof of Theorem \ref{second_thm}]
Let $L(t)=\{\xi\in\R^3:|\xi|\leq f(t)\}$ where $f(t)=\left(\frac{1}{1+t}\right)^{\frac{1}{2}}$.
By Plancherel's theorem, we have
\begin{align*}
\|\nabla^2\uu(t)\|^2_2
& = \int_{\R^3}|\xi|^4|\widehat{\uu}(t,\xi)|^2{\rm{d}}\xi\geq |f(t)|^2\int_{L(t)^c}|\xi|^2|\widehat{\uu}(t,\xi)|^2{\rm{d}}\xi\\
& =|f(t)|^2\int_{\R^3}|\xi|^2|\widehat{\uu}(t,\xi)|^2{\rm{d}}\xi-|f(t)|^2\int_{L(t)}|\xi|^2|\widehat{\uu}(t,\xi)|^2{\rm{d}}\xi\\
& \geq |f(t)|^2\|\nabla\uu(t)\|^2_2-|f(t)|^4\int_{L(t)}|\widehat{\uu}(t,\xi)|^2{\rm{d}}\xi\\
& \geq |f(t)|^2\|\nabla\uu(t)\|^2_2-|f(t)|^4\|\uu(t)\|^2_2.\\
\end{align*}
From Lemma \eqref{small_ode} and \eqref{smallness}, we can deduce that for almost all $t\in(0,T)$,
\[\frac{\rm{d}}{\rm{d}t}\|\nabla\uu(t)\|^2_2+\|\nabla^2\uu(t)\|^2_2\leq0.\]
Therefore, we obtain
\[\frac{\rm{d}}{\rm{d}t}\|\nabla\uu(t)\|^2_2+\frac{1}{1+t}\|\nabla^2\uu(t)\|^2_2\leq \left(\frac{1}{1+t}\right)^2\|\uu(t)\|^2_2.\]
Then for $\ell>\frac{5}{2}$, by Theorem \ref{main_thm}
\[\frac{\rm{d}}{{\rm{d}}t}\left((1+t)^{\ell}\|\nabla\uu(t)\|^2_2\right)\leq (1+t)^{\ell-2}\|\uu(t)\|^2_2\leq C(1+t)^{\ell-2-\frac{3}{2}}.\]
By integrating the above inequality over time, we have the desired decay estimate.
\end{proof}

\end{section}


\bibliography{references}
\bibliographystyle{abbrv}


\end{document}